\newtheorem{theorem}{Theorem}[section]
\newtheorem{lemma}{Lemma}[section]
\newtheorem{corollary}{Corollary}[section]
\newtheorem{definition}{Definition}[section]
\newtheorem{remark}{Remark}
\begin{document}
\begin{spacing}{1.1}
\begin{frontmatter}

\title{$N$-tuple wights noncommutative Orlicz spaces and some geometrical properties \tnoteref{mytitlenote}}
\tnotetext[mytitlenote]{The research has been supported by Research project of basic scientifific research business expenses of provincial colleges and universities in Hebei Province(2021QNJS11); Innovation and improvement project of academic team of Hebei University of Architecture Mathematics and Applied Mathematics (NO.
TD202006);  The Major Project of Education Department in Hebei (No. ZD2021039); Nature Science Foundation of Heibei Province under (No. A2019404009; China Postdoctoral Science Foundation (No. 2019M661047);Postdoctoral Foundation of Heibei Province under Grant B2019003016.}


\author[mymainaddress]{Ma Zhenhua  \corref{Ma Zhenhua}}
\cortext[Ma Zhenhua]{Corresponding author}
\ead{mazhenghua\_1981@163.com}

\author[mymainaddress]{Deng Quancai}
\cortext[Deng Quancai]{}
\ead{dqc\_com@163.com}

\address[mymainaddress]{Hebei University of Architecture, Zhangjiakou, 075024, P. R. China}

\begin{abstract}
This paper studies the $N$-tuple noncommutative Orlicz spaces $\bigoplus\limits_{j=1}^{n}L_{p,\lambda}^{(\Phi_{j})}(\widetilde{\mathcal{M}},\tau)$, where $L^{(\Phi_{j})}(\widetilde{\mathcal{M}},\tau)$ is noncommutative Orlicz spaces and $\widetilde{\mathcal{M}}$ is the $\tau$-measurable operators. Based on the maximum principle,  we give the Riesz-Thorin interpolation theorem on $\bigoplus\limits_{j=1}^{n}L_{p,\lambda}^{(\Phi_{j})}(\widetilde{\mathcal{M}},\tau)$ . As applications, the Clarkson inequality and some geometrical properties such as uniform convexity and unform smooth of noncommutative Orlicz space $L^{(\Phi_{s})}(\widetilde{\mathcal{M}},\tau), 0<s\leq1$ are given.
\end{abstract}

\begin{keyword}
Noncommutative Orlicz spaces\sep $\tau$-measurable operator\sep von Neumann algebra\sep Orlicz function\sep Riesz-Thorin interpolation
\MSC[2010] 46L52\sep 47L10\sep46A80
\end{keyword}

\end{frontmatter}

\linenumbers
\section{Preliminaries}
In 1936, for study uniform convexity of $L^{p}$ space, Clarkson gave some famous inequalities named Clarkson inequality \cite{Clarkson}.
In \cite{Xu},  the author used the noncommutative Riesz-Thorin interpolation theorem get the Clarkson inequality on noncommutative $L^{p}$ space. The principal objective of this paper is to investigate Riesz-Thorin interpolation theorem on noncommutative Orlicz spaces which yields the Clarkson inequality of noncommutative $L^{p}$ space. As applications, some geometrical properties such as uniform convexity and unform smooth of noncommutative Orlicz space $L^{(\Phi_{s})}(\widetilde{\mathcal{M}},\tau), 0<s\leq1$ are given.

The theory of Orlicz spaces associated to a trace was introduced by Muratov \cite{Muratov} and Kunze \cite{Kunze}.  Let $\mathcal{M}$ be a semi-finite von Neumann algebra acting on a Hilbert space $\mathcal{H}$ with a normal semi-finite faithful trace $\tau.$  A densely-defined closed linear operator $A: \mathcal{D}(A)\rightarrow \mathcal{H}$ with domain $\mathcal{D}(A)\subseteq\mathcal{H}$ is called affiliated with $\mathcal{M}$ if and only if $U^{\ast}AU=A$ for all unitary operators $U$ belonging to the commutant $\mathcal{M^{\prime}}$ of $\mathcal{M}$. Clearly, if $A\in \mathcal{M}$ then $A$ is affiliated with $\mathcal{M}$. If $A$ is a (densely-defined closed) operator affiliated with $\mathcal{M}$ and $A=U|A|$  the polar decomposition, where $|A|=(A^{\ast}A)^{\frac{1}{2}}$ and $U$ is a partial isometry, then $A$ said to be $\tau$-measurable if and only if there exists a number $\lambda\geq0$ such that $\tau(e_{(\lambda,\infty)}(|A|))<\infty$, where $e_{[0,\lambda]}$ is the spectral projection of $|A|$ and $\tau$ is the trace of normal faithful and semifinite. The collection of all $\tau$-measurable operators is denoted by $\widetilde{\mathcal{M}}$. The spectral decomposition implies that a von Neumann algebra $\mathcal{M}$ is generated by its projections.  Recall that an element $A\in \mathcal{M}_{+}$ is a linear combination of mutually orthogonal projections if $A=\sum\limits_{k=1}^{n}\alpha_{k}e_{k}$ with $\alpha_{k}\in \mathds{R}_{+}$ and projection $e_{k}\in \mathcal{M}$ such that $e_{k}e_{j}=0$ whenever $k\neq j$ \cite{Xu}.

Next we recall the definition and  some basic properties of noncommutative Orlicz spaces.

A function $\Phi: [0,\infty)\rightarrow[0,\infty]$ is called an Orlicz function
if and only if $\Phi(u)=\int^{|u|}_{0}p(t)dt$, where the right derivative $p$ of $\Phi$ satisfies $p$ is right-continuous and nondecreasing, $p(t)>0$ whenever $t>0$ and $p(0)=0$ with $\lim\limits_{t\rightarrow\infty}p(t)=\infty$ \cite{Chen}. Further we say an Orlicz function $\Phi$ satisfies the $\Delta_{2}$-condition for large $t$ (for small $t$, or for all $t$), written often as $\Phi\in\Delta_{2}$, if there exist constants $t_{0}>0, K>2$ such that $\Phi(2t)\leq K\Phi(t)$ for  $|t|\geq t_{0}$ \cite{Rao}. 

If $A\in\widetilde{\mathcal{M}}$ and $\Phi$ is an Orlicz function,  we denote $\widetilde{\rho}_{\Phi}(A)=\tau(\Phi(|A|))$, hence we can define a corresponding space, which is named the noncommutative Orlicz space, as follows:
$$L^{\Phi}(\widetilde{\mathcal{M}},\tau)=\{A\in\widetilde{\mathcal{M}}:  \tau(\Phi(\lambda|A|))<\infty \,\, for\,\,some\,\,\lambda>0\}.$$
Also we could define the subspace $$E^{\Phi}(\widetilde{\mathcal{M}},\tau)=\{A\in\widetilde{\mathcal{M}}:  \tau(\Phi(\lambda|A|))<\infty \,\, for\,\,any\,\,\lambda>0\}.$$
We equip these spaces with the Luxemburg norm
$$\|A\|_{(\Phi)}=\inf\{\lambda>0: \tau\left(\Phi\left(\frac{|A|}{\lambda}\right)\right)\leq1\}.$$
In the case of $\Phi(A)=|A|^{p},\,\,1\leq p<\infty$, $L^{(\Phi)}(\widetilde{\mathcal{M}},\tau)$ is nothing but the noncommutative space
$L^{p}(\widetilde{\mathcal{M}},\tau)
=\left\{A\in\widetilde{\mathcal{M}}: \tau\left(|A|^{p}\right)<\infty\right\}$ \cite{Zhenhua} and the Luxemburg norm generated by this function is expressed by the formula
$$\|A\|_{p}=\big(\tau(|A|^{p})\big)^{\frac{1}{p}}.$$

One can define another norm on  $L^{\Phi}(\widetilde{\mathcal{M}},\tau)$ as follows
$$\|A\|_{\Phi}=\sup\{\tau(|AB|): B\in  L^{\Psi}(\widetilde{\mathcal{M}},\tau)\,\, and\,\, \tau(\Psi(B))\leq1 \},$$
where $\Psi: [0,\infty)\rightarrow [0,\infty] $ is defined by $\Psi(u)=\sup\{uv-\Phi(v): v\geq 0\}$. Here we call $\Psi$ the complementary function of $\Phi$. In the following, we use $L^{(\Phi)}(\widetilde{\mathcal{M}},\tau)$ and $L^{\Phi}(\widetilde{\mathcal{M}},\tau)$ denote the Orlicz which equipped Luxemberg and Orlicz norm respectively. The same as $E^{(\Phi)}(\widetilde{\mathcal{M}},\tau)$ and $E^{\Phi}(\widetilde{\mathcal{M}},\tau)$.

For more information on the theory of noncommutative Orlicz spaces we refer the reader to \cite{Rashed,Rashed1,Ghadir,Kunze,Zhenhua,Muratov}.

\section{Riesz-Thorin interpolation theorem of Noncommutative Orlicz spaces}
In this section,  we will give the definition of $N$-tuple noncommutative Orlicz spaces, also give some norm inequalities. For research the Riesz-Thorin interpolation theorem, a equivalent definition of Luxmburg norm must be given. As a corollary, the Clarkson inequality of noncommutive $L^{p}$ space could be get. The main ideas and proof ideas in this article are derived from literatures \cite{Xu} and \cite{Rao}.


Now let $\mathcal{N}=\mathcal{M}\oplus\mathcal{M}\oplus\cdots \oplus \mathcal{M}$ be the $n$-th von Neumann algebra direct sum of $\mathcal{M}$ with it self. We know that $\mathcal{N}$ acts on the direct sum Hilbert space $\mathcal{H}\oplus\mathcal{H}\oplus\cdots \oplus \mathcal{H}$ coordinatewise: $$(A_{1},A_{2},\ldots, A_{n})(x_{1},x_{2},\ldots,x_{n})=\sum\limits^{n}_{j=1}A_{j}x_{n},$$ where $A_{j}\in \mathcal{M}, i=1,2,\ldots n.$
Then $\mathcal{N}_{+}=\mathcal{M}_{+}\oplus\mathcal{M}_{+}\oplus\cdots \oplus \mathcal{M}_{+}$. 

Define: $\upsilon: \mathcal{N}_{+}\rightarrow \mathds{C}$ by $\upsilon(A_{1},A_{2},\ldots, A_{n})=\sum\limits^{n}_{j=1}\lambda_{j}\tau(A_{j}),$ where $\lambda_{j}\geq0$ and  $\tau$ is a normal faithful normal faithful normalized trace on $\mathcal{M}$, then $\upsilon$ is a normal faithful normal faithful normalized trace on $\mathcal{N}$. Now we give the following definition:
\begin{definition}
Let $\Phi=(\Phi_{1},\Phi_{2},\ldots \Phi_{n})$ be an $n$-tuple of N functions $\Phi_{j}$. For each $p\geq1, \lambda_{j}\geq0,$ and $n$-tuple of wights $\lambda=(\lambda_{1},\ldots,\lambda_{n})$ consider the direct sum space, named $n$-tuple of wights noncommutative Orlicz spaces as follows: $$\bigoplus\limits_{j=1}^{n}E_{p,\lambda}^{(\Phi_{j})}
=\{A=(A_{1},A_{2},\ldots,A_{n}): A_{j}\in E^{(\Phi_{j})}(\widetilde{\mathcal{M}},\tau), 1\leq j\leq n\}$$
with norm $\|\cdot\|_{(\Phi),p,\lambda}$ defined for $A_{j}\in E^{(\Phi_{j})}(\widetilde{\mathcal{M}},\tau):$
\begin{equation}
\|A\|_{(\Phi),p,\lambda}=
\begin{cases} \left[\sum\limits_{j=1}^{n}\lambda_{j}\|A_{j}\|_{(\Phi_{j})}^{p}\right]^{\frac{1}{p}}, &1\leq p<\infty, \nonumber\\
\max\limits_{j}\|A_{j}\|_{(\Phi_{j})}, &p=\infty.
\end{cases}
\end{equation}
or the norm $\|\cdot\|_{\Phi,p,\lambda}$ defined in the same way as before in which $\|\cdot\|_{(\Phi_{j})}$ is replaced by the Orlicz norm $\|\cdot\|_{\Phi_{j}}$, denotes by $\bigoplus\limits_{j=1}^{n}E_{p,\lambda}^{\Phi_{j}}$. The same way, if $\Psi_{j}$ is the complementary N-function of $\Phi_{j}$, denotes by $\bigoplus\limits_{j=1}^{n}E_{q,\lambda}^{(\Psi_{j})}$ which equip with $\|\cdot\|_{(\Psi),q,\lambda}$ and $\bigoplus\limits_{j=1}^{n}E_{q,\lambda}^{\Psi_{j}}$ which equip with $\|\cdot\|_{\Psi,q,\lambda}$ for the same weights $\lambda=(\lambda_{1},\ldots,\lambda_{n})$ and $q=\frac{p}{p-1}$. The same way, we also could define $\bigoplus\limits_{j=1}^{n}L_{p,\lambda}^{(\Phi_{j})}$, $\bigoplus\limits_{j=1}^{n}L_{p,\lambda}^{\Phi_{j}}$, $\bigoplus\limits_{j=1}^{n}L_{q,\lambda}^{(\Psi_{j})}$,
$\bigoplus\limits_{j=1}^{n}L_{q,\lambda}^{\Psi_{j}}$, and the norm as before.
\end{definition}
\begin{remark}
By the Theorem 3.4 of \cite{Zhenhua}, if for any $1\leq j\leq n$ with $\Phi_{j}\in \Delta_{2}$, we have $\bigoplus\limits_{j=1}^{n}L_{p,\lambda}^{(\Phi_{j})}=\bigoplus\limits_{j=1}^{n}E_{p,\lambda}^{(\Phi_{j})}.$
\end{remark}
\begin{lemma}
If $A\in \bigoplus\limits_{j=1}^{n}L_{p,\lambda}^{(\Phi_{j})}$ and $B\in \bigoplus\limits_{j=1}^{n}L_{q,\lambda}^{\Psi_{j}}$, where $1\leq p<\infty$,  we have

$(1)$ If $\|A\|_{(\Phi),p,\lambda}\leq1$, then we have
$\upsilon(\Phi(A))\leq \|A\|_{(\Phi),p,\lambda}\cdot \delta_{1}$,  where $\delta_{1}=\left(\sum\limits^{n}_{j=1}\lambda_{j}\right)^{\frac{1}{q}}.$

$(2)$ If $\|A\|_{(\Phi),p,\lambda}>1$, then we have $\upsilon(\Phi(A))>\delta_{2}$, where $\delta_{2}=\left[\sum\limits^{n}_{j=1}\lambda^{p}_{j}\|A_{j}\|^{p}_{(\Phi_{j})}\right]^{\frac{1}{p}}.$

$(3)$ $\upsilon(AB)\leq\|A\|_{(\Phi),p,\lambda}\cdot\|B\|_{\Psi,q,\lambda}$.
\end{lemma}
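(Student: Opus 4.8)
\noindent\textit{Proof idea.}
All three parts reduce to combining coordinatewise estimates in the single noncommutative Orlicz spaces $L^{(\Phi_{j})}(\widetilde{\mathcal{M}},\tau)$ with the finite-dimensional Hölder inequality on $\mathbb{R}^{n}$ for the conjugate exponents $p$ and $q=\frac{p}{p-1}$. Write $\widetilde{\rho}_{\Phi_{j}}(A_{j})=\tau(\Phi_{j}(|A_{j}|))$, so that $\upsilon(\Phi(A))=\sum_{j=1}^{n}\lambda_{j}\widetilde{\rho}_{\Phi_{j}}(A_{j})$, and recall the two elementary facts coming from convexity of $\Phi_{j}$ and $\Phi_{j}(0)=0$: $\Phi_{j}(cu)\le c\,\Phi_{j}(u)$ for $0\le c\le1$ and $\Phi_{j}(cu)\ge c\,\Phi_{j}(u)$ for $c\ge1$. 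Applying these through the functional calculus and $\tau$, and using the standard property $\widetilde{\rho}_{\Phi_{j}}\big(A_{j}/\|A_{j}\|_{(\Phi_{j})}\big)\le1$, one obtains $\widetilde{\rho}_{\Phi_{j}}(A_{j})\le\|A_{j}\|_{(\Phi_{j})}$ whenever $\|A_{j}\|_{(\Phi_{j})}\le1$, and $\widetilde{\rho}_{\Phi_{j}}(A_{j})\ge\|A_{j}\|_{(\Phi_{j})}$ whenever $\|A_{j}\|_{(\Phi_{j})}\ge1$.

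For $(1)$, I would read off from $\|A\|_{(\Phi),p,\lambda}\le1$ that $\|A_{j}\|_{(\Phi_{j})}\le1$ for every $j$ (the normalization $\sum_{j}\lambda_{j}=1$, inherited from $\upsilon$ being a normalized trace, is what makes this work), so the first of the two facts gives $\widetilde{\rho}_{\Phi_{j}}(A_{j})\le\|A_{j}\|_{(\Phi_{j})}$. Summing with the weights and then applying Hölder's inequality on $\mathbb{R}^{n}$ to the vectors $\big(\lambda_{j}^{1/p}\|A_{j}\|_{(\Phi_{j})}\big)_{j}$ and $\big(\lambda_{j}^{1/q}\big)_{j}$ (using $\lambda_{j}=\lambda_{j}^{1/p}\lambda_{j}^{1/q}$) yields
\[
\upsilon(\Phi(A))\ \le\ \sum_{j=1}^{n}\lambda_{j}\|A_{j}\|_{(\Phi_{j})}\ \le\ \Big(\sum_{j=1}^{n}\lambda_{j}\|A_{j}\|_{(\Phi_{j})}^{p}\Big)^{1/p}\Big(\sum_{j=1}^{n}\lambda_{j}\Big)^{1/q}=\|A\|_{(\Phi),p,\lambda}\cdot\delta_{1}.
\]
For $(2)$, from $\|A\|_{(\Phi),p,\lambda}>1$ at least one coordinate $j_{0}$ has $\|A_{j_{0}}\|_{(\Phi_{j_{0}})}>1$; on every coordinate with $\|A_{j}\|_{(\Phi_{j})}\ge1$ the second fact applies, and combining it with the elementary inclusion $\ell^{1}\hookrightarrow\ell^{p}$, i.e.\ $\sum_{j}t_{j}\ge(\sum_{j}t_{j}^{p})^{1/p}$ for $t_{j}\ge0$, applied to $t_{j}=\lambda_{j}\widetilde{\rho}_{\Phi_{j}}(A_{j})$, I would obtain
\[
\upsilon(\Phi(A))=\sum_{j=1}^{n}\lambda_{j}\widetilde{\rho}_{\Phi_{j}}(A_{j})\ \ge\ \Big(\sum_{j=1}^{n}\lambda_{j}^{p}\,\widetilde{\rho}_{\Phi_{j}}(A_{j})^{p}\Big)^{1/p}\ \ge\ \Big(\sum_{j=1}^{n}\lambda_{j}^{p}\|A_{j}\|_{(\Phi_{j})}^{p}\Big)^{1/p}=\delta_{2},
\]
with strict inequality because of the contribution of the coordinate $j_{0}$.

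For $(3)$, since $\upsilon(AB)=\sum_{j=1}^{n}\lambda_{j}\tau(A_{j}B_{j})$ (each product $A_{j}B_{j}$ lies in $L^{1}(\widetilde{\mathcal{M}},\tau)$ by the one-coordinate Hölder inequality, so $\upsilon(AB)$ is well defined), I would apply the noncommutative Hölder inequality in each $L^{(\Phi_{j})}(\widetilde{\mathcal{M}},\tau)$ between the Luxemburg norm $\|\cdot\|_{(\Phi_{j})}$ and the Orlicz norm $\|\cdot\|_{\Psi_{j}}$ built from the complementary function $\Psi_{j}$, namely $|\tau(A_{j}B_{j})|\le\tau(|A_{j}B_{j}|)\le\|A_{j}\|_{(\Phi_{j})}\|B_{j}\|_{\Psi_{j}}$, and then Hölder on $\mathbb{R}^{n}$ to $\big(\lambda_{j}^{1/p}\|A_{j}\|_{(\Phi_{j})}\big)_{j}$ and $\big(\lambda_{j}^{1/q}\|B_{j}\|_{\Psi_{j}}\big)_{j}$:
\[
|\upsilon(AB)|\ \le\ \sum_{j=1}^{n}\lambda_{j}\|A_{j}\|_{(\Phi_{j})}\|B_{j}\|_{\Psi_{j}}\ \le\ \Big(\sum_{j=1}^{n}\lambda_{j}\|A_{j}\|_{(\Phi_{j})}^{p}\Big)^{1/p}\Big(\sum_{j=1}^{n}\lambda_{j}\|B_{j}\|_{\Psi_{j}}^{q}\Big)^{1/q}=\|A\|_{(\Phi),p,\lambda}\,\|B\|_{\Psi,q,\lambda}.
\]

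The routine parts are the two applications of the discrete Hölder inequality and assertion $(3)$, which is just the coordinatewise Orlicz Hölder inequality packaged with a finite-dimensional one. The step I expect to demand the most care is the passage between the modular $\upsilon(\Phi(\cdot))$ and the mixed norm $\|\cdot\|_{(\Phi),p,\lambda}$ in $(1)$ and $(2)$: unlike in a single scalar Orlicz space, $\|\cdot\|_{(\Phi),p,\lambda}$ has no single modular characterization, so the coordinates with $\|A_{j}\|_{(\Phi_{j})}\le1$ and those with $\|A_{j}\|_{(\Phi_{j})}\ge1$ must be handled separately, and it is precisely here that the normalization of the weights $\lambda$ (and, where needed, a $\Delta_{2}$-type comparison keeping each coordinate modular comparable to its Luxemburg norm) is used to make the constants $\delta_{1}$ and $\delta_{2}$ come out exactly as stated.
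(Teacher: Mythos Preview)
Your approach is essentially identical to the paper's: in each part you combine the single--space comparison between the modular $\tau(\Phi_{j}(|A_{j}|))$ and the Luxemburg norm $\|A_{j}\|_{(\Phi_{j})}$ (the paper cites this as Proposition~3.4 of \cite{Ghadir}; you derive it from convexity of $\Phi_{j}$) with the discrete H\"older inequality applied via the splitting $\lambda_{j}=\lambda_{j}^{1/p}\lambda_{j}^{1/q}$, and for $(3)$ the coordinatewise Orlicz--H\"older inequality (Theorem~3.3 of \cite{Ghadir} in the paper) followed by the same discrete H\"older step. The paper merely reverses the order in $(1)$, applying H\"older first and the modular--norm bound second; in $(2)$ it uses the same $\ell^{1}\hookrightarrow\ell^{p}$ inequality you use.

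One caution on your added justification in $(1)$: the claim that the normalization $\sum_{j}\lambda_{j}=1$ forces every $\|A_{j}\|_{(\Phi_{j})}\le 1$ from $\|A\|_{(\Phi),p,\lambda}\le 1$ is not correct --- a weighted $\ell^{p}$-average bounded by $1$ does not bound each term by $1$. The paper does not invoke any normalization at this point; it simply applies $\tau(\Phi_{j}(A_{j}))\le\|A_{j}\|_{(\Phi_{j})}$ coordinatewise without further comment. Stripped of that extra (and faulty) justification, your argument coincides with the paper's, and the subtlety you flag about mixed coordinates is one the paper itself leaves unaddressed.
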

\begin{proof}
$(1)$ If $\|A\|_{(\Phi),p,\lambda}\leq1$, then from Proposition 3.4 of \cite{Ghadir} and classical H$\rm\ddot{o}$lder inequality, we have
\begin{eqnarray*}
\upsilon(\Phi(A))&=&\sum\limits^{n}_{j=1}\lambda_{j}\tau(\Phi_{j}(A_{j}))\\
&=&\sum\limits^{n}_{j=1}\lambda_{j}^{\frac{1}{q}}\cdot\lambda_{j}^{\frac{1}{p}}\tau(\Phi_{j}(A_{j}))\\
&\leq&\left[\sum\limits^{n}_{j=1}\lambda_{j}\left[\tau(\Phi_{j}(A_{j}))\right)^{p}\right]^{\frac{1}{p}}\cdot \left(\sum\limits^{n}_{j=1}\lambda_{j}\right)^{\frac{1}{q}}\\
&\leq&\left[\sum\limits^{n}_{j=1}\lambda_{j}\|A_{j}\|_{(\Phi_{j})}^{p}\right]^{\frac{1}{p}}\cdot \delta_{1}\\
&=&\|A\|_{(\Phi),p,\lambda}\cdot\delta_{1}.
\end{eqnarray*}

$(2)$ If $\|A\|_{(\Phi),p,\lambda}>1$, then from Proposition 3.4 of \cite{Ghadir}, we have
\begin{eqnarray*}
\left[\upsilon(\Phi(A))\right]^{p}&=&\left[\sum\limits^{n}_{j=1}\lambda_{j}\tau(\Phi_{j}(A_{j}))\right]^{p}\\
&>&\left[\sum\limits^{n}_{j=1}\lambda_{j}\|A_{j}\|_{(\Phi_{j})}\right]^{p}\\
&\geq&\sum\limits^{n}_{j=1}\lambda^{p}_{j}\|A_{j}\|^{p}_{(\Phi_{j})},
\end{eqnarray*}
which means that $$\upsilon(\Phi(A))>\left[\sum\limits^{n}_{j=1}\lambda^{p}_{j}\|A_{j}\|^{p}_{(\Phi_{j})}\right]^{\frac{1}{p}}=\delta_{2}.$$

$(3)$ From Theorem 3.3 of \cite{Ghadir} and classical H$\rm\ddot{o}$lder inequality, we get that
\begin{eqnarray*}
\upsilon(AB)&=&\sum\limits^{n}_{j=1}\lambda_{j}\left|\tau(A_{j}B_{j})\right|\\
&\leq&\sum\limits^{n}_{j=1}\lambda_{j}^{\frac{1}{p}+\frac{1}{q}}\|A_{j}\|_{(\Phi)}\|B_{j}\|_{\Psi}\\
&\leq&\left(\sum\limits^{n}_{j=1}\lambda_{j}\|A_{j}\|^{p}_{(\Phi)}\right)^{\frac{1}{p}}
\cdot\left(\sum\limits^{n}_{j=1}\lambda_{j}\|B_{j}\|^{q}_{\Psi}\right)^{\frac{1}{q}}\\
&=&\|A\|_{(\Phi),p,\lambda}\cdot \|B\|_{\Psi,q,\lambda},
\end{eqnarray*}
\end{proof}
\begin{remark}
If $\Phi$ is 1-tuple N-function and $\lambda=1$, the Lemma 2.1 just be the Theorem 3.3 and Proposition 3.4 of \cite{Ghadir}.
\end{remark}
\begin{theorem}
If $A\in \bigoplus\limits_{j=1}^{n}E_{p,\lambda}^{(\Phi_{j})}$, then for $1\leq p<\infty$, the weighted norms $\|\cdot\|_{(\Phi),p,\lambda}$ is given by
$$\|A\|_{(\Phi),p,\lambda}=\sup\left\{\upsilon(AB):\|B\|_{\Psi,q,\lambda}\leq1\right\},$$
\end{theorem}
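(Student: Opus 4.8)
The plan is to prove the identity by establishing the two inequalities separately; write $N(A)=\|A\|_{(\Phi),p,\lambda}$ and $S(A)=\sup\{\upsilon(AB):\|B\|_{\Psi,q,\lambda}\le1\}$, and dispose at once of the trivial case $A=0$ (equivalently $N(A)=0$), in which both sides vanish by faithfulness of $\tau$. Throughout, $\upsilon(AB)$ is read as $\sum_{j}\lambda_j|\tau(A_jB_j)|$, exactly as in Lemma 2.1(3).

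The inequality $S(A)\le N(A)$ is immediate from Lemma 2.1(3): whenever $\|B\|_{\Psi,q,\lambda}\le1$ one has $\upsilon(AB)\le\|A\|_{(\Phi),p,\lambda}\,\|B\|_{\Psi,q,\lambda}\le N(A)$, so the supremum cannot exceed $N(A)$.

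For the reverse inequality $N(A)\le S(A)$ I would exhibit an almost-optimal test element built from the single-component Orlicz--Luxemburg duality, i.e.\ the one-tuple case ($n=1$, $\lambda=1$)
$$\|A_j\|_{(\Phi_j)}=\sup\bigl\{|\tau(A_jB_j)|:\|B_j\|_{\Psi_j}\le1\bigr\},$$
which is classical for Orlicz spaces (see \cite{Rao}) and carries over to the noncommutative setting by \cite{Ghadir}. Fix $\varepsilon>0$; for each $j$ pick $C_j\in L^{\Psi_j}(\widetilde{\mathcal{M}},\tau)$ with $\|C_j\|_{\Psi_j}\le1$ and $|\tau(A_jC_j)|\ge\|A_j\|_{(\Phi_j)}-\varepsilon$. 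Put $T=\sum_{k=1}^{n}\lambda_k\|A_k\|_{(\Phi_k)}^{p}=N(A)^{p}$ and define $B_j=\alpha_jC_j$ with $\alpha_j=\|A_j\|_{(\Phi_j)}^{p-1}/T^{1/q}$ (indices with $\lambda_j=0$ or $\|A_j\|_{(\Phi_j)}=0$ contribute nothing). Using the conjugacy relation $(p-1)q=p$ one checks that $B=(B_1,\dots,B_n)$ is admissible, since $\sum_{j}\lambda_j\|B_j\|_{\Psi_j}^{q}\le\sum_{j}\lambda_j\alpha_j^{q}=T^{-1}\sum_{j}\lambda_j\|A_j\|_{(\Phi_j)}^{p}=1$, while $\upsilon(AB)=\sum_{j}\lambda_j\alpha_j|\tau(A_jC_j)|\ge T^{-1/q}\sum_{j}\lambda_j\|A_j\|_{(\Phi_j)}^{p}-\varepsilon\,T^{-1/q}\sum_{j}\lambda_j\|A_j\|_{(\Phi_j)}^{p-1}=N(A)-C(A)\,\varepsilon$, where $C(A)=T^{-1/q}\sum_{j}\lambda_j\|A_j\|_{(\Phi_j)}^{p-1}<\infty$. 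Letting $\varepsilon\downarrow0$ yields $S(A)\ge N(A)$. The endpoint $p=1$ (so $q=\infty$ and the constraint reads $\max_j\|B_j\|_{\Psi_j}\le1$) is the degenerate case $\alpha_j\equiv1$, handled the same way by choosing the $C_j$ independently component by component.

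The weight bookkeeping above is routine — the exponents $\alpha_j$ are precisely the equality case of the (reverse) H\"older inequality already used in Lemma 2.1 — so \emph{the main obstacle is the single-component duality itself}, i.e.\ the near-attainment of H\"older's inequality in one noncommutative Orlicz space. If one prefers not to quote it, it must be proved here by approximating $A_j$ by linear combinations of mutually orthogonal projections (as recalled in Section 1), reducing to the commutative Orlicz-space computation on the spectrum of $|A_j|$, and combining the comparison between the modular $\widetilde{\rho}_{\Phi_j}$ and the Luxemburg norm with the supremum-definition of $\|\cdot\|_{\Psi_j}$; the delicate point is the case $\Phi_j\notin\Delta_2$, where $E^{(\Phi_j)}(\widetilde{\mathcal{M}},\tau)\subsetneq L^{(\Phi_j)}(\widetilde{\mathcal{M}},\tau)$ and one must ensure the near-maximizer $C_j$ still lies in $L^{\Psi_j}(\widetilde{\mathcal{M}},\tau)$.
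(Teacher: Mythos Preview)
Your argument is correct, and it differs from the paper's own proof in a meaningful way. The paper does \emph{not} quote the single-component duality $\|A_j\|_{(\Phi_j)}=\sup\{|\tau(A_jC_j)|:\|C_j\|_{\Psi_j}\le1\}$ and then reassemble via the H\"older-extremal weights $\alpha_j=\|A_j\|_{(\Phi_j)}^{p-1}/T^{1/q}$; instead it normalizes to $\|A\|_{(\Phi),p,\lambda}=1$, approximates each $A_j\ge0$ by finite spectral truncations $A_{jm}$, and builds the near-maximizer explicitly as $B_{jm}$ proportional to $p\bigl((1+\varepsilon)\lambda_jA_{jm}\bigr)$, where $p=\Phi'$, so that Young's \emph{equality} $x\,p(x)=\Phi(x)+\Psi(p(x))$ converts $\tau(A_{jm}B_{jm})$ into a modular expression bounded below. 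Your route cleanly separates the genuinely Orlicz content (the one-space duality, which you rightly isolate as the only nontrivial input and flag for the $\Phi_j\notin\Delta_2$ case) from the elementary $\ell^p$-direct-sum bookkeeping, and it makes the role of the exponent $p$ transparent through the equality case of H\"older; the paper's construction is more self-contained --- it does not appeal to an external duality result --- but at the cost of carrying the constants $\delta_1,\delta_2$ through a computation whose final inequality is only sketched. Either path is valid; yours is the more modular one.
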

\begin{proof}
If $\|B\|_{\Psi,q,\lambda}\leq1$. One side, by (3) of the Lemma 2.1, we have
$$\upsilon(AB)\leq\|A\|_{(\Phi),p,\lambda}\cdot \|B\|_{\Psi,q,\lambda}\leq \|A\|_{(\Phi),p,\lambda}.$$
The other side, for simplicity, we may take that $\|A\|_{(\Phi),p,\lambda}=1$ and assume that $A_{j}\geq0$. Let $\{e_{jn}\}$ be the projection of $A_{j}$ and $0<\tau(e_{jn})<\infty$. By Proposition 3.4 of \cite{Ghadir}, for any $\varepsilon>0$, we have $\tau\left[\Phi_{j}\left((1+\varepsilon)A_{j}\right)\right]\geq\|(1+\varepsilon)A_{j}\|_{(\Phi_{j})}=1+\varepsilon$.

If we define the operator $A_{jm}=A_{j}(e_{j1}+e_{j2}+\cdots+e_{jm})(m\leq n)$, where $A_{j}=\sum\limits_{k=1}^{n}\alpha_{k}e_{jk}$ and $e_{jk}=0, k>n$, then $A_{jm}\uparrow A_{j}$ as $m\rightarrow\infty$, there exists an $m_{0}$ such that for $m\geq m_{0}$ one have $$\upsilon\left[\frac{1}{\delta_{2}}\Phi\left((1+\varepsilon)A_{m}\right)\right]=\sum\limits_{k=1}^{n}\frac{1}{\delta_{2}}\lambda_{j}\Phi_{j}((1+\varepsilon)A_{jm})\geq \left(1+\frac{\varepsilon}{2}\right).$$

If we set $$B_{jm}=\frac{\delta_{2}^{-1}p((1+\varepsilon)\lambda_{j}A_{jm})}{\delta_{1}\left(1+\tau(\Psi_{j}( \delta_{2}^{-1}p((1+\varepsilon)\lambda_{j}A_{jm})))\right)},$$
then $B_{jm}$ is bounded operators and $B_{m}\in \bigoplus\limits_{j=1}^{n}E_{q,\lambda}^{\Psi_{j}}$ for each $m$. Moreover by definition 1.7 of \cite{Zhenhua} and 1.9 of \cite{Chen} we have, $\|B_{jm}\|_{\Psi_{j},q,\lambda}\leq1$ since $\|A\|_{(\Phi),p,\lambda}=1$.

Hence, $$\|B_{m}\|_{\Psi,q,\lambda}=\left(\sum\limits^{n}_{j=1}\lambda_{j}\|B_{jm}\|^{q}_{\Psi_{j}}\right)^{\frac{1}{q}}\leq1.$$
However, one has
\begin{eqnarray*}
\sup\{\upsilon(AB)\}&=&\sup\left\{\sum\limits^{n}_{j=1}\lambda_{j}\tau(A_{j}B_{j}):B_{j}\in E^{\Psi_{j}},\|B\|_{\Psi,q,\lambda}\leq1\right\}\\
&\geq&\sup\limits_{m\geq m_{0}}\left\{\sum\limits^{n}_{j=1}\lambda_{j}\tau(A_{j} B_{jm}):B_{jm}\in E^{\Psi_{j}}, \|B_{m}\|_{\Psi,q,\lambda}\leq1\right\}\\
&\geq&\frac{1}{1+\varepsilon}\sup_{m\geq m_{0}}\left\{\sum\limits^{n}_{j=1}\tau((1+\varepsilon)\lambda_{j}A_{jm} B_{jm})\right\}\\
&=&\frac{1}{1+\varepsilon}\sup_{m\geq m_{0}}\left\{\sum\limits^{n}_{j=1}\frac{\tau(\Phi_{j}(1+\varepsilon)\lambda_{j}A_{jm})+\tau(\Psi_{j}(\delta_{2}^{-1}p(1+\varepsilon)\lambda_{j}A_{jm}))}{\delta_{1}\delta_{2}\left(1+\tau(\Psi_{j}(\delta_{2}^{-1}p(1+\varepsilon)\lambda_{j}A_{jm}))\right)}\right\}\\
&>&\frac{1}{1+\varepsilon},
\end{eqnarray*}
since $\varepsilon>0$ is arbitrary we get the desired inequality.
\end{proof}
\begin{definition}\cite{Cleaver}
Let $\Phi_{1}$ and $\Phi_{2}$ be N-functions and define $\Phi_{s}$ to be the inverse of $\Phi_{s}^{-1}(u)=[\Phi_{1}^{-1}(u)]^{1-s}[\Phi_{2}^{-1}(u)]^{s}$ for $0\leq s\leq1, u\geq0$, where $\Phi^{-1}$ is the unique inverse of the N-function $\Phi$.
\end{definition}
\begin{theorem}
Let $\Phi_{i}=(\Phi_{i1},\Phi_{i2},\ldots \Phi_{in}),Q_{i}=(Q_{i1},Q_{i2},\ldots Q_{in}),i=1,2$ be n-tuples of N-functions and $0\leq r_{1},r_{2},t_{1},t_{2}\leq\infty, \lambda=(\lambda_{1},\ldots,\lambda_{n})$  be given positive numbers. Next let $\Phi_{s}=(\Phi_{s1},\Phi_{s2},\ldots \Phi_{sn}),Q_{s}=(Q_{s1},Q_{s2},\ldots Q_{sn})$ be the associated intermediate N-functions, $$\frac{1}{r_{s}}=\frac{1-s}{r_{1}}+\frac{s}{r_{2}}, \frac{1}{t_{s}}=\frac{1-s}{t_{1}}+\frac{s}{t_{2}}, 0\leq s\leq1.$$
If $T:\bigoplus\limits_{j=1}^{n}E^{(\Phi_{ij})}_{r_{i},\lambda}\rightarrow \bigoplus\limits_{j=1}^{n}L^{(Q_{ij})}_{t_{i},\lambda}$ is a bounded linear operator with bounds $K_{1},K_{2}$, such that
$\|TA\|_{(Q_{i}),t_{i},\lambda}\leq K_{i}\|A\|_{(\Phi_{i}),r_{i},\lambda}, A\in \bigoplus\limits_{j=1}^{n}E^{(\Phi_{ij})}_{r_{i},\lambda}, i=1,2$.

Then $T$ is also defined on $\bigoplus\limits_{j=1}^{n}E^{(\Phi_{sj})}_{r_{s},\lambda}$ into $\bigoplus\limits_{j=1}^{n}L^{(\Phi_{sj})}_{r_{s},\lambda}$ for all $0\leq s\leq1$ and one have the bound
$$\|TA\|_{(Q_{s}),t_{s},\lambda}\leq K_{1}^{1-s}K_{2}^{s}\|A\|_{(\Phi_{s}),r_{s},\lambda},$$ where $A\in \bigoplus\limits_{j=1}^{n}E^{(\Phi_{sj})}_{r_{s},\lambda}$.
\end{theorem}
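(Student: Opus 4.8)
The plan is to run the classical Riesz--Thorin argument through Hadamard's three--lines theorem (the ``maximum principle'' mentioned in the abstract), with the duality formula of Theorem~2.1 in place of the usual $L^{p}$--$L^{q}$ pairing. First I would reduce to a dense subclass: by the spectral decomposition recalled in Section~1, the \emph{simple} elements $A=(A_{1},\dots,A_{n})$, those for which each $|A_{j}|=\sum_{k}\alpha_{jk}e_{jk}$ is a finite sum of mutually orthogonal projections $e_{jk}\in\mathcal{M}$ of finite trace, are dense in $\bigoplus_{j=1}^{n}E^{(\Phi_{sj})}_{r_{s},\lambda}$ and lie in every $E$-space in sight, so it suffices to establish the bound for such $A$, and by homogeneity we may normalise $\|A\|_{(\Phi_{s}),r_{s},\lambda}=1$. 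Applying Theorem~2.1 to the target space, $\|TA\|_{(Q_{s}),t_{s},\lambda}=\sup\{\upsilon((TA)B):\|B\|_{\Psi_{s},t_{s}',\lambda}\le1\}$, where $\Psi_{sj}$ denotes the complementary $N$-function of $Q_{sj}$ and $t_{s}'=t_{s}/(t_{s}-1)$; again one may take $B$ simple and $\|B\|_{\Psi_{s},t_{s}',\lambda}=1$.

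For such fixed $A$ and $B$ I would construct two analytic families $\{A_{z}\}$ and $\{B_{z}\}$ on the closed strip $S=\{z\in\mathds{C}:0\le\mathrm{Re}\,z\le1\}$. Using the polar decompositions $A_{j}=u_{j}|A_{j}|$, the interpolation identity $\Phi_{sj}^{-1}(u)=[\Phi_{1j}^{-1}(u)]^{1-s}[\Phi_{2j}^{-1}(u)]^{s}$ of Definition~2.2, and the harmonic relations $1/r_{s}=(1-s)/r_{1}+s/r_{2}$, $1/t_{s}=(1-s)/t_{1}+s/t_{2}$, one sets $A_{z}=(A_{1}(z),\dots,A_{n}(z))$, where $A_{j}(z)$ is obtained from $u_{j}|A_{j}|$ by replacing each eigenvalue $\alpha_{jk}$ with an analytic, exponential-type expression $\beta_{jk}(z)$ and by inserting $z$-dependent powers of the weights $\lambda_{j}$ to track the varying $\ell^{p}$-exponent; the design requirements are $A_{s}=A$, together with $A_{iy}$ in the closed unit ball of $\bigoplus_{j=1}^{n}E^{(\Phi_{1j})}_{r_{1},\lambda}$ and $A_{1+iy}$ in that of $\bigoplus_{j=1}^{n}E^{(\Phi_{2j})}_{r_{2},\lambda}$ for every $y\in\mathds{R}$. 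The family $\{B_{z}\}$ is built symmetrically against the complementary $N$-functions, using the corresponding identity for the $\Psi_{sj}$ (valid up to the standard equivalence constant) and the conjugate exponents $t_{i}'$, so that $B_{iy}$ and $B_{1+iy}$ lie in the unit balls of $\bigoplus_{j=1}^{n}E^{\Psi_{1j}}_{t_{1}',\lambda}$ and $\bigoplus_{j=1}^{n}E^{\Psi_{2j}}_{t_{2}',\lambda}$ respectively, with $B_{s}=B$.

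Next I would set $F(z)=\upsilon((TA_{z})B_{z})$. Because $A,B$ are simple and $T$ is linear, $TA_{z}$ is a finite linear combination of fixed operators with analytic scalar coefficients, so $F$ is a finite sum of products of entire functions with bounded exponential factors; hence $F$ is holomorphic in the interior of $S$ and bounded and continuous on $S$. On the left edge, Lemma~2.1(3) and the hypothesis $\|TA\|_{(Q_{1}),t_{1},\lambda}\le K_{1}\|A\|_{(\Phi_{1}),r_{1},\lambda}$ give
$$|F(iy)|\le\|TA_{iy}\|_{(Q_{1}),t_{1},\lambda}\,\|B_{iy}\|_{\Psi_{1},t_{1}',\lambda}\le K_{1}\|A_{iy}\|_{(\Phi_{1}),r_{1},\lambda}\le K_{1},$$
and symmetrically $|F(1+iy)|\le K_{2}$ on the right edge. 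Hadamard's three--lines theorem then yields $|F(s)|\le K_{1}^{1-s}K_{2}^{s}$, that is $|\upsilon((TA)B)|\le K_{1}^{1-s}K_{2}^{s}$. Taking the supremum over admissible $B$ via Theorem~2.1, then removing the normalisation by homogeneity and passing from simple $A$ to general $A\in\bigoplus_{j=1}^{n}E^{(\Phi_{sj})}_{r_{s},\lambda}$ by density (which simultaneously defines $TA$ there as a limit), gives $\|TA\|_{(Q_{s}),t_{s},\lambda}\le K_{1}^{1-s}K_{2}^{s}\|A\|_{(\Phi_{s}),r_{s},\lambda}$.

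The hard part will be the explicit construction of $\{A_{z}\}$ and $\{B_{z}\}$ and the verification of the boundary norm bounds: one has to choose the eigenvalue exponents $\beta_{jk}(z)$ and the weight powers so that the Luxemburg norms on the lines $\mathrm{Re}\,z=0$ and $\mathrm{Re}\,z=1$ stay $\le1$, which requires repeatedly translating between the norm $\|\cdot\|_{(\Phi),p,\lambda}$ and the modular $\upsilon(\Phi(\cdot))$ through Lemma~2.1(1)--(2) and Theorem~2.1, and using that the complementary function of the intermediate $N$-function $Q_{sj}$ is comparable to the interpolant of the complementary functions of $Q_{1j},Q_{2j}$ --- the place where an equivalence constant (or a $\Delta_{2}$-type assumption) may have to be absorbed. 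A secondary technical point is ensuring each $A_{j}(z),B_{j}(z)$ remains $\tau$-measurable with finite traces; this is exactly guaranteed by working first with simple elements, which is also what makes $F$ bounded on $S$ so that the three-lines theorem applies.
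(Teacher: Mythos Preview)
Your proposal follows essentially the same route as the paper: work with simple elements, build analytic families $A(z),B(z)$ on the strip, set up the pairing $\upsilon(B(z)\,TA(z))$, bound it on the two boundary lines via Lemma~2.1(3) together with the hypothesis on $T$, invoke the maximum principle, and then recover the norm of $TA$ through the duality formula of Theorem~2.1. The paper packages the constants into $H(z)=K_{1}^{z-1}K_{2}^{-z}\upsilon(B(z)\,TA(z))$ so that the target bound is $|H|\le1$, but this is cosmetically equivalent to your $F$.

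The one substantive difference is the construction of the analytic family. The paper writes it down explicitly as
\[
A_{k}(z)=U_{k}\,\Phi_{sk}\!\big[(\Phi_{1k}^{-1})^{1-z}(\Phi_{2k}^{-1})^{z}\big](|A_{k}|),\qquad
B_{k}(z)=V_{k}\,\Psi_{sk}\!\big[(\Psi_{1k}^{-1})^{1-z}(\Psi_{2k}^{-1})^{z}\big](|B_{k}|),
\]
and then checks directly that $|A_{k}(it)|=\Phi_{sk}(\Phi_{1k}^{-1}(|A_{k}|))$, whence $\|A_{j}(it)\|_{(\Phi_{1j})}=\|A_{j}\|_{(\Phi_{sj})}$; in particular it does \emph{not} introduce any $z$-dependent powers of the weights $\lambda_{j}$ or vary the outer $\ell^{p}$-exponent along the strip --- it simply keeps $r_{s}$ (resp.\ $t_{s}$) fixed and computes $\|A(it)\|_{(\Phi_{1}),r_{s},\lambda}=\|A\|_{(\Phi_{s}),r_{s},\lambda}$. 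Your proposed extra machinery (weight powers tracking the $\ell^{p}$-exponent, and worrying about the interpolation of complementary functions) is closer in spirit to the classical $L^{p}$ Riesz--Thorin proof and would address the passage from $r_{s}$ to $r_{i}$ on the boundary more carefully, but the paper's version is considerably simpler and gets to the conclusion without it.
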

\begin{proof}
Let $A=(A_{1},A_{2},\ldots,A_{n})\in \bigoplus\limits_{j=1}^{n}E^{(\Phi_{sj})}_{r_{s},\lambda}, B=(B_{1},B_{2},\ldots,B_{n})\in\bigoplus\limits_{j=1}^{n}E^{\Psi_{tj}}_{t_{s},\lambda}$ with polar decompositions  $A_{k}=U_{k}|A_{k}|,B_{k}=V_{k}|B_{k}|$. Assume that $\|A\|_{(\Phi_{i}),r_{s},\lambda}\leq1, \|B\|_{\Psi_{i},t_{s},\lambda}\leq1$ where $|A_{k}|=\sum\limits_{j=1}^{n}\alpha_{j}e_{kj}, |B_{k}|=\sum\limits_{j=1}^{n}\beta_{j}e'_{kj}$.

Define for $z=\mathds{C}$ and $k=1,2,\ldots,n$
$$A(z)=(A_{1}(z),A_{2}(z),\ldots,A_{n}(z))$$  and $$B(z)=(B_{1}(z),B_{2}(z),\ldots,B_{n}(z)),$$
where
$$A_{k}(z)=U_{k}\Phi_{sk}\left[(\Phi_{1k}^{-1})^{1-z}(\Phi_{2k}^{-1})^{z}\right](|A_{k}|),$$
$$B_{k}(z)=V_{k}\Psi_{sk}\left[(\Psi_{1k}^{-1})^{1-z}(\Psi_{2k}^{-1})^{z}\right](|B_{k}|).$$
Then,
\begin{eqnarray*}
A_{k}(z)&=&U_{k}\Phi_{sk}\left[\left(\Phi_{1k}^{-1}\left(\sum_{j=1}^{n}\alpha_{j}e_{kj}\right)\right)^{1-z}\left(\Phi_{2k}^{-1}\left(\sum\limits_{j=1}^{n}\alpha_{j}e_{kj}\right)\right)^{z}\right]\\
&=&\sum\limits_{j=1}^{n}\Phi_{sk}\left[\left(\Phi_{1k}^{-1}(\alpha_{j})\right)^{1-z}\left(\Phi_{2k}^{-1}(\alpha_{j})\right)^{z}\right]U_{k} e_{kj}
\end{eqnarray*}
Hence, $z\rightarrow A(z)$ is an analytic function on $\mathds{C}$ with value in $\widetilde{\mathcal{M}}$. The same reduction applies to $B$.

Now we could define a bounded entire function $$H(z)=K_{1}^{z-1}K_{2}^{-z}\tau(B(z)TA(z)).$$
If $z=it$ for $t\in \mathds{R}$, we have
\begin{eqnarray*}
A_{k}(it)&=&\sum\limits_{j=1}^{n}\Phi_{sk}\left[\Phi^{-1}_{sk}(\alpha_{j})\right]U_{k} e_{kj}\\
&=&\sum\limits_{j=1}^{n}\Phi_{sk}[\left(\Phi_{1k}^{-1}(\alpha_{j})\right)^{1-it}(\Phi_{2k}^{-1}(\alpha_{j}))^{it}]U_{k} e_{kj}\\
&=&\sum\limits_{j=1}^{n}\Phi_{sk}\left[\left(\frac{\Phi_{2k}^{-1}(\alpha_{j})}{\Phi_{1k}^{-1}(\alpha_{j})}\right)^{it}\right]U_{k} e_{kj}\cdot \sum\limits_{j=1}^{n}\Phi_{sk}\left[\Phi_{1k}^{-1}(\alpha_{j})\right]U_{k} e_{kj}\\
&=&\left[\Phi_{sk}\left(\frac{\Phi_{2k}^{-1}}{\Phi_{1k}^{-1}}(|A_{k}|)\right)\right]^{it}\cdot \Phi_{sk}\left(\Phi_{1k}^{-1}(|A_{k}|)\right).
\end{eqnarray*}
Hence,$$|A_{k}(it)|^{2}=A_{k}(it)^{*}A_{k}(it)=\left[\Phi_{sk}\left(\Phi_{1k}^{-1}(|A_{k}|)\right)\right]^{2}$$ which means $$|A_{k}(it)|=\Phi_{sk}\left(\Phi_{1k}^{-1}(|A_{k}|)\right).$$
Hence for any $1\leq k\leq n$ we have $\tau(\Phi_{1k}(A_{k}(it)))=\tau(\Phi_{sk}(A_{k}))$ which implies that $$\|A_{j}(it)\|_{(\Phi_{1j})}=\|A_{j}\|_{(\Phi_{sj})}$$ and
\begin{eqnarray*} \upsilon(\Phi_{1}(A(it)))&=&\sum\limits_{j=1}^{n}\lambda_{j}\tau\left[\Phi_{1j}\left[\Phi_{sk}\left(\Phi_{1j}^{-1}(|A_{j}|)\right)\right]\right]\\
&=&\lambda_{1}\tau(\Phi_{s1}(|A_{1}|))+\lambda_{2}\tau(\Phi_{s2}(|A_{2}|))+\ldots+\lambda_{n}\tau(\Phi_{sn}(|A_{n}|))\\
&=&\upsilon(\Phi_{s}(|A|)),
\end{eqnarray*}
we get that $$\|A(it)\|_{(\Phi_{1}),r_{s},\lambda}=\|A\|_{(\Phi_{s}),r_{s},\lambda}\leq1.$$
Similar $\|B(it)\|_{\Psi_{1},t_{s},\lambda}=\|B\|_{\Psi_{s},t_{s},\lambda}\leq1$.
Thus by (3) of the Lemma 2.1 and the assumption on $T$, we have
$$|\tau(B(it)TA(it))|\leq  K_{1}\|B(it)\|_{\Psi_{1},t_{s},\lambda}\|A(it)\|_{(\Phi_{1}),r_{s},\lambda}\leq K_{1}.$$
It then follows that $|H(it)|\leq 1$ for any $t\in \mathds{R}$. In the same way, we show $|H(1+it)|\leq 1$. Therefore, by the maximum principle, for any $\theta\in \mathds{C}$, we get $$|H(\theta)|=|K_{1}^{\theta-1}K_{2}^{-\theta}\tau(B(\theta)TA(\theta)|\leq1.$$
Hence, $$|\tau(BTA)|\leq K_{1}^{1-\theta}K_{2}^{\theta}$$
By the Theorem 2.1 we could get that $$\|TA\|_{(Q_{s}),r_{s},\lambda}\leq K_{1}^{1-\theta}K_{2}^{\theta}\|A\|_{(\Phi_{s}),r_{s},\lambda}.$$
\end{proof}

\begin{theorem}
Let $\Phi$ be an N-function and $\Phi_{s}$ be the inverse which satisfies that $\Phi_{s}^{-1}(u)=\left[\Phi^{-1}(u)\right]^{1-s}\left[\Phi_{0}^{-1}(u)\right]^{s}
=\left[\Phi^{-1}(u)\right]^{1-s}u^{\frac{s}{2}}$ where $0< s\leq1$ and $\Phi_{0}(u)=u^{2}$. If $L^{(\Phi)}(\widetilde{\mathcal{M}},\tau)$ is the noncommutative Orlicz space, then we have for $A,B\in L^{(\Phi_{s})}(\widetilde{\mathcal{M}},\tau)$:
$$\left(\|A+B\|_{(\Phi_{s})}^{\frac{2}{s}}+\|A-B\|_{(\Phi_{s})}^{\frac{2}{s}}\right)^{\frac{s}{2}}
\leq 2^{\frac{s}{2}}\left(\|A\|_{(\Phi_{s})}^{\frac{2}{2-s}}+\|B\|_{(\Phi_{s})}^{\frac{2}{2-s}}\right)^{\frac{2-s}{2}}.$$
\end{theorem}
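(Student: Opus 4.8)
The plan is to obtain the Clarkson-type inequality as a direct application of the vector-valued Riesz--Thorin interpolation theorem (Theorem 2.2) to a carefully chosen operator on a $2$-tuple space. Concretely, I would take $n=2$ and consider the linear map $T$ sending the pair $(A,B)$ to the pair $(A+B, A-B)$. The key computations are the two endpoint bounds. At the first endpoint ($s=0$, i.e.\ the index $r_1 = t_1$) I would work in the $L^{(\Phi)}$ setting with the exponent corresponding to $p=\infty$ in Definition 2.1, so that the domain norm is $\max(\|A\|_{(\Phi)},\|B\|_{(\Phi)})$ and the target norm is likewise a max; then $\|A+B\|_{(\Phi)}\le \|A\|_{(\Phi)}+\|B\|_{(\Phi)}\le 2\max(\|A\|_{(\Phi)},\|B\|_{(\Phi)})$ and similarly for $A-B$, giving bound $K_1 = 2$. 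At the second endpoint ($s=1$) I would work in the $L^{(\Phi_0)} = L^{2}$ setting with $p=2$, where the parallelogram law for the Hilbert-space norm $\|\cdot\|_2$ on $L^2(\widetilde{\mathcal M},\tau)$ gives exactly $\|A+B\|_2^2 + \|A-B\|_2^2 = 2(\|A\|_2^2+\|B\|_2^2)$, i.e.\ $T$ maps $\bigoplus_{j=1}^2 L^{(\Phi_0)}_{2,\lambda}$ to itself (with the right choice of weights $\lambda$) with bound $K_2 = \sqrt 2$.

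Next I would identify the intermediate spaces produced by Theorem 2.2. Using Definition 2.2 with the two N-functions $\Phi$ and $\Phi_0(u)=u^2$, the interpolated N-function is precisely the $\Phi_s$ of the statement, characterized by $\Phi_s^{-1}(u) = [\Phi^{-1}(u)]^{1-s} u^{s/2}$. The interpolated exponents come from the harmonic-mean formula $\tfrac1{r_s} = \tfrac{1-s}{r_1} + \tfrac s{r_2}$: with $r_1 = \infty$ on the domain side one gets $r_s = 2/s$ in the target slot and, after the appropriate bookkeeping of which index sits where (domain versus codomain), the exponent $2/(2-s)$ on the domain side. Plugging $K_1 = 2$, $K_2 = \sqrt 2$ into the conclusion $\|TA\|_{(Q_s),t_s,\lambda} \le K_1^{1-s}K_2^{s}\|A\|_{(\Phi_s),r_s,\lambda}$ yields the constant $2^{1-s}\cdot 2^{s/2} = 2^{1 - s/2}$, and one checks $2^{1-s/2} = 2^{s/2}\cdot 2^{1-s}$ — matching the $2^{s/2}$ prefactor in the claimed inequality once the definition of the weighted $n$-tuple norm (Definition 2.1) is written out for $n=2$ with $\lambda_1 = \lambda_2$. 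Writing $\|(A+B,A-B)\|$ and $\|(A,B)\|$ out explicitly with the exponents $2/s$ and $2/(2-s)$ reproduces the displayed formula.

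The main obstacle I anticipate is bookkeeping rather than depth: one must be careful that Theorem 2.2 as stated is phrased with $E$-spaces (the closure-type subspaces) on the domain and $L$-spaces on the codomain, so I would first reduce to $A,B$ with bounded support (finite linear combinations of mutually orthogonal projections, as in the proof of Theorem 2.1) where all the spaces coincide, prove the inequality there, and then pass to the general case by a density/monotone-limit argument using $\Delta_2$ or the Fatou-type property of the Luxemburg norm. A second delicate point is matching the weight vectors $\lambda$ on the two sides and at the two endpoints so that the intermediate weighted norm is literally the one appearing in the statement; since the inequality to be proved has symmetric, unweighted-looking form, the natural choice is $\lambda_1=\lambda_2=1$ throughout, and I would verify that with this choice the endpoint operator norms are exactly $2$ and $\sqrt2$ as computed above. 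Finally, I would double-check the exponent algebra: that $r_1=\infty,\ r_2=2$ really do interpolate to the pair $(2/(2-s),\,2/s)$ in the roles demanded by the two-tuple norm, which is the one spot where an off-by-duality slip is easy to make.
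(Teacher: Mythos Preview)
Your overall strategy---apply Theorem~2.2 to $T(A,B)=(A+B,A-B)$ on a $2$-tuple space with weights $\lambda=(1,1)$---is exactly the paper's. But your choice of endpoint exponents is wrong, and the hand-waving about ``bookkeeping of which index sits where'' cannot be made rigorous. If you take $r_1=t_1=\infty$ and $r_2=t_2=2$, the interpolation formula forces $r_s=t_s=2/s$ on \emph{both} domain and target; there is simply no way to produce the exponent $2/(2-s)$ by interpolating between $\infty$ and $2$. Correspondingly your constant $K_1^{1-s}K_2^{s}=2^{1-s}\cdot 2^{s/2}=2^{1-s/2}$ is not $2^{s/2}$, and no choice of equal weights $\lambda_1=\lambda_2$ can repair an exponent mismatch.

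The missing idea is that the domain and target exponents at the first endpoint must be \emph{different}: take $r_1=1$ on the domain, so that $\|(A,B)\|_{(\Phi_1),1}=\|A\|_{(\Phi)}+\|B\|_{(\Phi)}$, and $t_1=\infty$ on the target, so that $\|T(A,B)\|_{(Q_1),\infty}=\max\{\|A+B\|_{(\Phi)},\|A-B\|_{(\Phi)}\}$. The triangle inequality then gives $K_1=1$ (not $2$). Keeping $r_2=t_2=2$ and $K_2=\sqrt{2}$ from the parallelogram law, the interpolation formula now yields $r_s=2/(2-s)$ on the domain, $t_s=2/s$ on the target, and constant $K_1^{1-s}K_2^{s}=2^{s/2}$---exactly the claimed inequality. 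This is precisely what the paper does.
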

\begin{proof}
Let $\Phi_{1}=(\Phi,\Phi)$ be the 2-vector of N-functions, $\lambda=(1,1),1\leq r_{1}\leq\infty$ and set
$$\bigoplus\limits_{j=1}^{2}E_{r_{1}}^{(\Phi)}(\widetilde{\mathcal{M}},\tau)
=\{(A,B): A,B\in E^{(\Phi)}(\widetilde{\mathcal{M}},\tau), \|(A,B)\|_{(\Phi_{1}),r_{1}}<\infty\},$$
where
\begin{equation}
\|(A,B)\|_{(\Phi_{1}),r_{1}}=
\begin{cases} \left[\|A\|_{(\Phi)}^{r_{1}}+\|B\|_{(\Phi)}^{r_{1}}\right]^{\frac{1}{r_{1}}}, &1\leq r_{1}<\infty, \nonumber\\
\max\{\|A\|_{(\Phi)},\|B\|_{(\Phi)}\}, &r_{1}=\infty.
\end{cases}
\end{equation}
Take $Q_{1}=\Phi_{1}=(\Phi,\Phi)$ and $Q_{2}=\Phi_{2}=(\Phi_{0},\Phi_{0})$ where $\Phi_{0}(u)=u^{2}$.

Set $r_{1}=1,r_{2}=t_{2}=2$ and $t_{1}=+\infty$. Define the linear operator $T: \bigoplus\limits_{j=1}^{2}E^{(\Phi_{i})}_{r_{i}}\rightarrow \bigoplus\limits_{j=1}^{2}L^{(Q_{i})}_{t_{i}}$ by the equation $T(A,B)=(A+B,A-B)$, we then have
\begin{eqnarray*} \|T(A,B)\|_{(Q_{1}),t_{1}}&=&\max\{\|A+B\|_{(\Phi)},\|A-B\|_{(\Phi)}\}\\
&\leq& \|A\|_{(\Phi)}+\|B\|_{(\Phi)}\\
&=&K_{1}\|(A,B)\|_{(\Phi_{1}),r_{1}}.
\end{eqnarray*}
Hence,$K_{1}=1$  and since $\|\cdot\|_{(\Phi_{0})}=\|\cdot\|_{2}$, we find
\begin{eqnarray*} \|T(A,B)\|_{(Q_{2}),t_{2}}&=&\left[\|A+B\|^{2}_{2}+\|A-B\|^{2}_{2}\right]^{\frac{1}{2}}\\
&=& \sqrt{2}\left[\|A\|^{2}_{2}+\|B\|^{2}_{2}\right]^{\frac{1}{2}}\\
&=&K_{2}\|(A,B)\|_{(\Phi_{2}),r_{2}}.
\end{eqnarray*}
Thus $K_{2}=\sqrt{2}$. Let $r_{s}$ and $t_{s}$ be given by $$\frac{1}{r_{s}}=\frac{1-s}{r_{1}}+\frac{s}{r_{2}}, \frac{1}{t_{s}}=\frac{1-s}{t_{1}}+\frac{s}{t_{2}}$$
then we have, $r_{s}=\frac{2}{2-s}, t_{s}=\frac{2}{s}$.

By the results of Theorem 2.2,
$$\|T(A,B)\|_{(Q_{s}),t_{s}}\leq 2^{\frac{s}{s}}\|(A,B)\|_{(\Phi_{s}),r_{s}}$$
since $K_{1}^{1-s}K_{2}^{s}=2^{\frac{s}{2}}$. Hence, we have
$$\|(A,B)\|_{(Q_{s}),r_{s}}=\left[\|A\|_{(\Phi_{s})}^{\frac{2}{2-s}}+\|B\|_{(\Phi_{s})}^{\frac{2}{2-s}}\right]^{\frac{2-s}{2}}$$
and
$$\|T(A,B)\|_{(Q_{s}),t_{s}}=\left(\|A+B\|_{(\Phi_{s})}^{\frac{2}{s}}+\|A-B\|_{(\Phi_{s})}^{\frac{2}{s}}\right)^{\frac{s}{2}}$$ which we could get the result.
\end{proof}
The following corollary is Clarkson inequality of noncommutative $L^{p}$ space and proof the process is completely similar to the P42 of \cite{Rao}.
\begin{corollary}
Suppose that $1<p<\infty$ and $q=\frac{p}{p-1}$. Then for $A,B\in L^{p}(\widetilde{\mathcal{M}},\tau)$, we have
$$\left(\|A+B\|_{p}^{q}+\|A-B\|_{p}^{q}\right)^{\frac{1}{q}}\leq2^{\frac{1}{q}}\left(\|A\|_{p}^{p}+\|B\|_{p}^{p}\right)^{\frac{1}{p}}, 1<p\leq2,$$
and
$$\left(\|A+B\|_{p}^{p}+\|A-B\|_{p}^{p}\right)^{\frac{1}{p}}\leq2^{\frac{1}{p}}\left(\|A\|_{p}^{q}+\|B\|_{p}^{q}\right)^{\frac{1}{q}}, 2\leq p\leq \infty.$$
\end{corollary}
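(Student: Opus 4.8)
The plan is to reproduce the interpolation argument of Theorem 2.3 almost word for word, but with the abstract $N$-function $\Phi$ replaced by a power function, so that the intermediate Orlicz space becomes an honest $L^{p}$ space; this is the noncommutative counterpart of the derivation on p.~42 of \cite{Rao}. Fix the two-tuple of weights $\lambda=(1,1)$ and consider the linear map $T(A,B)=(A+B,A-B)$, regarded as an operator between the two-fold direct sums $\bigoplus_{j=1}^{2}L^{p}_{r,\lambda}(\widetilde{\mathcal{M}},\tau)$, i.e. $L^{p}(\widetilde{\mathcal{M}},\tau)\oplus L^{p}(\widetilde{\mathcal{M}},\tau)$ equipped with $\big(\|A_{1}\|_{p}^{r}+\|A_{2}\|_{p}^{r}\big)^{1/r}$ (and $\max_{j}\|A_{j}\|_{p}$ if $r=\infty$). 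One then applies the $N$-tuple Riesz--Thorin theorem (Theorem 2.2) to $T$ twice, with two different endpoint pairs.

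\noindent\emph{Case $1<p\le 2$.} Interpolate between the endpoint $i=1$ with $(r_{1},t_{1})=(1,\infty)$ and summand $L^{1}$, and the endpoint $i=2$ with $(r_{2},t_{2})=(2,2)$ and summand $L^{2}$. At the first endpoint, $\|T(A,B)\|=\max\{\|A+B\|_{1},\|A-B\|_{1}\}\le\|A\|_{1}+\|B\|_{1}$ gives $K_{1}=1$; at the second, the parallelogram identity $\|A+B\|_{2}^{2}+\|A-B\|_{2}^{2}=2(\|A\|_{2}^{2}+\|B\|_{2}^{2})$ gives $K_{2}=\sqrt{2}$. The index relations $\tfrac{1}{r_{s}}=(1-s)+\tfrac{s}{2}$, $\tfrac{1}{t_{s}}=\tfrac{s}{2}$ and the interpolation rule for the intermediate $N$-function (Definition 2.2) show that the choice $s=2/q\in(0,1]$ makes the intermediate summand $L^{p}$, $r_{s}=p$ and $t_{s}=q$, while $K_{1}^{1-s}K_{2}^{s}=2^{s/2}=2^{1/q}$. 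Hence Theorem 2.2 yields
\[
\big(\|A+B\|_{p}^{q}+\|A-B\|_{p}^{q}\big)^{1/q}\le 2^{1/q}\big(\|A\|_{p}^{p}+\|B\|_{p}^{p}\big)^{1/p}.
\]

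\noindent\emph{Case $2\le p\le\infty$.} Now interpolate between $L^{2}$ and $L^{\infty}=\mathcal{M}$: take $i=1$ with $(r_{1},t_{1})=(2,2)$ and summand $L^{2}$, and $i=2$ with $(r_{2},t_{2})=(1,\infty)$ and summand $L^{\infty}$. Here $K_{1}=\sqrt{2}$ (parallelogram law) and $K_{2}=1$ (triangle inequality $\max\{\|A+B\|_{\infty},\|A-B\|_{\infty}\}\le\|A\|_{\infty}+\|B\|_{\infty}$); with $s=1-2/p$ one gets $r_{s}=q$, $t_{s}=p$ and $K_{1}^{1-s}K_{2}^{s}=2^{(1-s)/2}=2^{1/p}$, so Theorem 2.2 gives
\[
\big(\|A+B\|_{p}^{p}+\|A-B\|_{p}^{p}\big)^{1/p}\le 2^{1/p}\big(\|A\|_{p}^{q}+\|B\|_{p}^{q}\big)^{1/q}.
\]

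The step I expect to be the main obstacle is exactly the occurrence of the degenerate endpoints $L^{1}$ and $L^{\infty}$: the corresponding ``summand functions'' $u$ and $u^{\infty}$ are not $N$-functions, so Theorem 2.2 is not applicable to those endpoints as literally stated. I would dispose of this in one of two standard ways, each keeping the proof parallel to \cite{Rao} and \cite{Xu}. First, the analytic-family argument behind Theorem 2.2 uses only the three-lines lemma, the norming duality of Theorem 2.1 --- which for $L^{1}$ and $L^{\infty}$ reduces to the classical $L^{1}$--$L^{\infty}$ duality --- and the two endpoint bounds; all of these hold on the noncommutative $L^{p}$-scale, and this is precisely the classical noncommutative Riesz--Thorin theorem used for the $L^{p}$ Clarkson inequality in \cite{Xu}. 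Alternatively, replace $u$ by $u^{1+\varepsilon}$ (respectively $u^{\infty}$ by $u^{1/\varepsilon}$), which is a genuine $N$-function satisfying the $\Delta_{2}$-condition (so that $E=L$ there), apply Theorem 2.2 to obtain the asserted inequality with $p$ replaced by a nearby exponent $p_{\varepsilon}$, and let $\varepsilon\to 0^{+}$: using $p_{\varepsilon}\to p$, the convergence $\|X\|_{p_{\varepsilon}}\to\|X\|_{p}$ on the $\|\cdot\|_{p}$-dense set of bounded operators with finite-trace support, and the continuity of both sides of each inequality in the $L^{p}$-norm, one recovers the stated inequalities on all of $L^{p}(\widetilde{\mathcal{M}},\tau)$. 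Apart from this point, what remains is routine index bookkeeping and the two endpoint norm computations.
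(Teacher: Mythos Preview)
Your argument is correct; both alternatives you outline lead to the stated inequalities. The paper, however, does not take your primary route (direct Riesz--Thorin with the degenerate endpoints $L^{1}$, $L^{\infty}$). Instead it uses a variant of your second alternative, but organized through Theorem~2.3 rather than Theorem~2.2, and with a parameter choice that avoids approximating the \emph{space}. Concretely, for $1<p\le 2$ the paper takes $\Phi(u)=|u|^{\alpha}$ with $1<\alpha<p$ and sets $s=\dfrac{2(p-\alpha)}{p(2-\alpha)}$; one checks $\Phi_{s}(u)=|u|^{p}$, so $L^{(\Phi_{s})}=L^{p}$ \emph{for every} $\alpha$. Theorem~2.3 then gives, for all such $\alpha$, an inequality on $L^{p}$ with outer exponents $2/s$ and $2/(2-s)$; as $\alpha\downarrow 1$ one has $2/s\to q$ and $2/(2-s)\to p$, and the limit is just continuity of $(a^{x}+b^{x})^{1/x}$ in $x$ with the four $L^{p}$-norms held fixed. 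The case $2\le p$ is handled symmetrically with $\Phi(u)=|u|^{\beta}$, $\beta>p$, $s=\dfrac{2(\beta-p)}{p(\beta-2)}$, and $\beta\uparrow\infty$.

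The trade-off: your first route is cleaner and is exactly the argument in \cite{Xu}, but it steps outside Theorem~2.2 as stated in the paper (you need the $L^{p}$-scale Riesz--Thorin directly). Your second route works but, as you phrase it, varies the space $L^{p_{\varepsilon}}$ and hence requires the density/continuity argument you describe. The paper's trick of choosing $s=s(\alpha)$ so that $\Phi_{s}\equiv|u|^{p}$ sidesteps that: only the outer $\ell^{r}$ exponents move, so the limiting step is a one-line remark about real-valued functions. If you want to match the paper, apply Theorem~2.3 (not 2.2) with $\Phi(u)=|u|^{\alpha}$ and the $s$ above, and take the limit in the exponents.
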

\begin{proof}
If $1<p\leq2$, let $1<\alpha<p\leq2$ and $\Phi(u)=|u|^{\alpha}, \Phi_{0}(u)=|u|^{2}, s=\frac{2(p-\alpha)}{p(2-\alpha)}$. Then $0<s\leq1$ and $\Phi_{s}^{-1}(u)=|u|^{\frac{1}{p}}$ or $\Phi_{s}(u)=|u|^{p}$. Hence $\|\cdot\|_{(\Phi_{s})}=\|\cdot\|_{(p)}$ and since $\lim\limits_{\alpha\downarrow1}\frac{2}{s}=\frac{p}{p-1}=q$; $\lim\limits_{\alpha\downarrow1}\frac{2-s}{2}=\frac{1}{p}$ by the Theorem 2.3  we get the first inequality.

Similar let $2\leq p<\beta<\infty$  and $\Phi(u)=|u|^{\beta}, \Phi_{0}(u)=|u|^{2}, s=\frac{2(\beta-p)}{p(\beta-2)}$. Then $0\leq s\leq1$ and $\Phi_{s}(u)=|u|^{p},$ $\lim\limits_{\beta\uparrow\infty}\frac{2}{s}=p$; $\lim\limits_{\beta\uparrow\infty}\frac{2-s}{2}=\frac{1}{q}$, by the Theorem 2.3  we get the second inequality.
\end{proof}
\section{Some geometrical properties}
This section we contains some geometrical properties of noncommutative Orlicz spaces. These include uniform convexity, uniform smoothness which generalize the results of  noncommutative $L^{p}$ spaces. All these properties are based on Clarkson inequalities.
\begin{definition}\cite{Lindenstrauss}
Let $X$ be a Banach space. We define its modulus of convexity by
$$\delta_{X}(\varepsilon)=\inf\left\{1-\left\|\frac{x+y}{2}\right\|: x,y\in X, \|x\|=\|y\|=1, \|x-y\|=\varepsilon\right\}, 0<\varepsilon<2$$
and its modulus of smoothness by
 $$\rho_{X}(t)=\sup\left\{\frac{\|x+ty\|+\|x-ty\|}{2}-1: x,y\in X, \|x\|=\|y\|=1\right\}, t>0.$$
 $X$ is said to be uniformly convex if $\delta_{X}(\varepsilon)>0$ for every $2\geq\varepsilon>0$, and uniformly smooth if $\lim\limits_{t\rightarrow0}\frac{\rho_{X}(t)}{t}=0.$
\end{definition}
\begin{theorem}
Let $\Phi$ be an N-function and $\Phi_{s}$ be the inverse which satisfies that $\Phi_{s}^{-1}(u)=\left[\Phi^{-1}(u)\right]^{1-s}\left[\Phi_{0}^{-1}(u)\right]^{s}
=\left[\Phi^{-1}(u)\right]^{1-s}u^{\frac{s}{2}}$ where $0< s\leq1$ and $\Phi_{0}(u)=u^{2}$, then we have for $0<\varepsilon\leq2,$
$$\delta_{L^{(\Phi_{s})}}(\varepsilon)\geq 1-\frac{1}{2}\left[2^{\frac{2}{s}}-\varepsilon^{\frac{2}{s}}\right]^{\frac{s}{2}}$$
and
$$\rho_{L^{(\Phi_{s})}(t)}\leq \left(1+t^{\frac{2}{2-s}}\right)^{\frac{2-s}{2}}-1.$$
\end{theorem}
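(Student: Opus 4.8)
The plan is to obtain both estimates as direct consequences of the Clarkson inequality of Theorem 2.3, combined only with elementary power-mean inequalities; no new analytic input is required, and the argument parallels the classical $L^p$ computation behind Definition 3.1.

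For the lower bound on $\delta_{L^{(\Phi_s)}}$, I would fix $x,y\in L^{(\Phi_s)}(\widetilde{\mathcal M},\tau)$ with $\|x\|_{(\Phi_s)}=\|y\|_{(\Phi_s)}=1$ and $\|x-y\|_{(\Phi_s)}=\varepsilon$, and apply Theorem 2.3 with $A=x$, $B=y$. Since the right-hand side is $2^{s/2}(1+1)^{(2-s)/2}=2^{s/2}2^{(2-s)/2}=2$ (the exponents $2/s$ and $2/(2-s)$ being conjugate because $s/2+(2-s)/2=1$), raising to the power $2/s$ gives $\|x+y\|_{(\Phi_s)}^{2/s}+\varepsilon^{2/s}\le 2^{2/s}$, hence $\|x+y\|_{(\Phi_s)}\le\bigl(2^{2/s}-\varepsilon^{2/s}\bigr)^{s/2}$ (the bracket being nonnegative since $\varepsilon\le 2$). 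Dividing by $2$ yields $1-\bigl\|\tfrac{x+y}{2}\bigr\|_{(\Phi_s)}\ge 1-\tfrac12\bigl(2^{2/s}-\varepsilon^{2/s}\bigr)^{s/2}$, and taking the infimum over all admissible pairs $x,y$ gives the stated inequality for $\delta_{L^{(\Phi_s)}}(\varepsilon)$.

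For the upper bound on $\rho_{L^{(\Phi_s)}}$, I would fix $x,y$ with $\|x\|_{(\Phi_s)}=\|y\|_{(\Phi_s)}=1$ and $t>0$, and apply Theorem 2.3 with $A=x$, $B=ty$, which gives $\bigl(\|x+ty\|_{(\Phi_s)}^{2/s}+\|x-ty\|_{(\Phi_s)}^{2/s}\bigr)^{s/2}\le 2^{s/2}\bigl(1+t^{2/(2-s)}\bigr)^{(2-s)/2}$. Raising to the power $2/s$ and dividing by $2$ gives $\tfrac12\bigl(\|x+ty\|_{(\Phi_s)}^{2/s}+\|x-ty\|_{(\Phi_s)}^{2/s}\bigr)\le\bigl(1+t^{2/(2-s)}\bigr)^{(2-s)/s}$. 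Since $2/s\ge 1$, the power-mean inequality $\tfrac{a+b}{2}\le\bigl(\tfrac{a^{2/s}+b^{2/s}}{2}\bigr)^{s/2}$, applied with $a=\|x+ty\|_{(\Phi_s)}$ and $b=\|x-ty\|_{(\Phi_s)}$, then yields $\tfrac{\|x+ty\|_{(\Phi_s)}+\|x-ty\|_{(\Phi_s)}}{2}\le\bigl(1+t^{2/(2-s)}\bigr)^{(2-s)/2}$. Subtracting $1$ and taking the supremum over $x,y$ gives the claimed bound on $\rho_{L^{(\Phi_s)}}(t)$; in particular $2/(2-s)>1$ for $s>0$, so the bound tends to $0$ faster than $t$, confirming uniform smoothness.

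The argument is essentially bookkeeping once Theorem 2.3 is available; the only points that need care are tracking the conjugate exponents $2/s$ and $2/(2-s)$ through the power manipulations, and in the smoothness case ensuring that the power-mean inequality is used in the correct direction, which it is since $2/s\ge 1$. (One should also recall that $L^{(\Phi_s)}(\widetilde{\mathcal M},\tau)$ with the Luxemburg norm is a Banach space, so that the moduli $\delta$ and $\rho$ of Definition 3.1 are well defined.)
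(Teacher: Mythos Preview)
Your proposal is correct and follows essentially the same route as the paper: both parts are deduced directly from the Clarkson inequality (Theorem~2.3), with the smoothness estimate obtained via the power-mean (convexity of $u\mapsto u^{2/s}$) inequality applied to $\|x+ty\|_{(\Phi_s)}$ and $\|x-ty\|_{(\Phi_s)}$. The only cosmetic difference is that the paper records $2/s\ge 2$ rather than your $2/s\ge 1$, but either suffices for the convexity step.
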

\begin{proof}
First, if $\|A-B\|_{(\Phi_{s})}=\varepsilon$, then theorem  2.3 implies for $A,B\in L^{(\Phi_{s})}(\widetilde{\mathcal{M}},\tau)$,
$$\left(\|A+B\|_{(\Phi_{s})}^{\frac{2}{s}}+\varepsilon^{\frac{2}{s}}\right)^{\frac{s}{2}}
\leq 2^{\frac{s}{2}}\cdot2^{\frac{2-s}{2}}=2.$$
Hence,
$$1-\frac{1}{2}\|A+B\|_{(\Phi_{s})}\geq1-\frac{1}{2}\left[2^{\frac{2}{s}}-\varepsilon^{\frac{2}{s}}\right]^{\frac{s}{2}}.$$
Taking infimum of $\|A\|_{(\Phi_{s})}=\|B\|_{(\Phi_{s})}=1$ we can get the desired result and $L^{(\Phi_{s})}(\widetilde{\mathcal{M}},\tau)$ is uniform convexity if $0<\varepsilon\leq2$, and reflexive.

Second, if $\|A\|_{(\Phi_{s})}=\|B\|_{(\Phi_{s})}=1$, then since $\frac{2}{s}\geq2$,
\begin{eqnarray*} \left[\frac{1}{2}\left(\|A+tB\|_{(\Phi_{s})}+\|A-tB\|_{(\Phi_{s})}\right)\right]^{\frac{2}{s}}&\leq&\frac{1}{2}\left[\|A+tB\|^{\frac{2}{s}}_{(\Phi_{s})}+\|A-tB\|^{\frac{2}{s}}_{(\Phi_{s})}\right]\\
&\leq&\frac{1}{2}\left[2^{\frac{s}{2}}\left(\|A\|^{\frac{2}{2-s}}_{(\Phi_{s})}+\|tB\|^{\frac{2}{2-s}}_{(\Phi_{s})}\right)^{\frac{2-s}{2}}\right]^{\frac{2}{s}}\\
&=&\frac{1}{2}\left[2^{\frac{s}{2}}\left(1+t^{\frac{2}{2-s}}\right)^{\frac{2-s}{2}}\right]^{\frac{2}{s}}\\
&=&\left(1+t^{\frac{2}{2-s}}\right)^{\frac{2-s}{s}}.
\end{eqnarray*}
Hence,
$$\frac{1}{2}\left(\|A+tB\|_{(\Phi_{s})}+\|A-tB\|_{(\Phi_{s})}\right)-1\leq \left(1+t^{\frac{2}{2-s}}\right)^{\frac{2-s}{2}}-1.$$
Taking the supremum on the left we can get the conclusion. Since $t>0$, we have that $L^{(\Phi_{s})}(\widetilde{\mathcal{M}},\tau)$ is uniformly smooth.
\end{proof}
From corollary 2.1, we can easily get the following results which appeared on \cite{Xu}.
\begin{corollary}
Suppose that $1<p<\infty, q=\frac{p}{p-1}, 0<\varepsilon<\varepsilon$ and $t>0$. Then for $A,B\in L^{p}(\widetilde{\mathcal{M}},\tau)$, we have

$(1)$ If $1<p<2$, then
$$\delta_{L^{p}}(\varepsilon)\geq \frac{\varepsilon^{q}}{q\cdot2^{q}}\ \ and\ \ \rho_{L^{p}(t)}\leq \frac{t^{p}}{p}.$$

$(2)$ If $2<p<\infty$, then
$$\delta_{L^{p}}(\varepsilon)\geq \frac{\varepsilon^{p}}{p\cdot2^{p}}\ \ and\ \ \rho_{L^{p}(t)}\leq \frac{t^{q}}{q}.$$

$(3)$ $L^{p}(\widetilde{\mathcal{M}},\tau)$ is uniformly convex and uniformly smooth. Consequently its reflexive.
\end{corollary}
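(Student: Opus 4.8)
The plan is to feed the two Clarkson inequalities of Corollary 2.1 into precisely the computation already carried out in the proof of Theorem 3.1, with the abstract exponents $2/s$ and $2/(2-s)$ replaced by their concrete values, and then to finish with an elementary Bernoulli-type estimate. Throughout I would use the two inequalities $(1-x)^{r}\le 1-rx$ and $(1+x)^{r}\le 1+rx$, valid for $0\le x\le 1$ (resp.\ $x\ge 0$) and $0<r\le 1$, which follow by differentiating.

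\emph{Case $1<p<2$ (so $q=p/(p-1)>2$).} For $\delta_{L^{p}}(\varepsilon)$ I would take $A,B\in L^{p}(\widetilde{\mathcal{M}},\tau)$ with $\|A\|_{p}=\|B\|_{p}=1$ and $\|A-B\|_{p}=\varepsilon$. The right-hand side of the first inequality of Corollary 2.1 then equals $2^{1/q}\cdot 2^{1/p}=2$, so that inequality gives $\|A+B\|_{p}\le(2^{q}-\varepsilon^{q})^{1/q}=2\bigl(1-(\varepsilon/2)^{q}\bigr)^{1/q}$. Applying $(1-x)^{r}\le 1-rx$ with $r=1/q$ and $x=(\varepsilon/2)^{q}$ yields $1-\frac12\|A+B\|_{p}\ge \varepsilon^{q}/(q\,2^{q})$, and taking the infimum over all admissible $A,B$ gives the stated bound on $\delta_{L^{p}}$. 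For $\rho_{L^{p}}(t)$ I would repeat the smoothness step of Theorem 3.1 with $A$ and $tB$: the same Clarkson inequality together with the power-mean inequality $\bigl(\frac{a+b}{2}\bigr)^{q}\le\frac{a^{q}+b^{q}}{2}$ (legitimate since $q>1$) gives $\frac12(\|A+tB\|_{p}+\|A-tB\|_{p})\le(1+t^{p})^{1/p}$, and then $(1+x)^{r}\le 1+rx$ with $r=1/p$ gives $\rho_{L^{p}}(t)\le t^{p}/p$.

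\emph{Case $2<p<\infty$.} This is identical, now using the second inequality of Corollary 2.1; the roles of $p$ and $q$ are interchanged, the power-mean step uses the exponent $p>1$, and the two Bernoulli estimates use $r=1/p$ and $r=1/q$, yielding $\delta_{L^{p}}(\varepsilon)\ge\varepsilon^{p}/(p\,2^{p})$ and $\rho_{L^{p}}(t)\le t^{q}/q$.

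\emph{Part (3).} In either case $\delta_{L^{p}}(\varepsilon)>0$ for every $0<\varepsilon\le 2$, so $L^{p}(\widetilde{\mathcal{M}},\tau)$ is uniformly convex; since $\rho_{L^{p}}(t)/t$ is bounded by $t^{p-1}/p$ or $t^{q-1}/q$, both tending to $0$ as $t\to 0$, it is uniformly smooth; the remaining value $p=2$ is just the Hilbert space $L^{2}(\widetilde{\mathcal{M}},\tau)$, which is trivially both. Reflexivity then follows from uniform convexity by the Milman--Pettis theorem. The only non-formal ingredient is the pair of elementary estimates for $(1\pm x)^{r}$ with $0<r\le 1$; beyond those the whole argument is a line-by-line transcription of the proof of Theorem 3.1, so I do not anticipate any real obstacle.
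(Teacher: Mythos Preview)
Your proposal is correct and follows exactly the route the paper indicates: the paper gives no proof beyond the remark ``From corollary 2.1, we can easily get the following results,'' so your argument---running the computation of Theorem~3.1 with the concrete Clarkson inequalities of Corollary~2.1 and then cleaning up with the elementary Bernoulli estimates $(1\pm x)^{r}\le 1\pm rx$ for $0<r\le 1$---is precisely the intended filling-in of that remark. The only thing you add beyond the template of Theorem~3.1 is the Bernoulli step needed to pass from $1-\tfrac12(2^{q}-\varepsilon^{q})^{1/q}$ and $(1+t^{p})^{1/p}-1$ to the stated polynomial bounds, and that step is standard and correctly applied.
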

\section*{Acknowledgement} We want to express our gratitude to the referee for all his/her careful revision and suggestions which has improved the final version of this work.
\section*{References}

\bibliography{mybibfile}

\begin{thebibliography}{00}
 \bibitem{Cleaver}  C. E. Cleaver, On the extension of Lipschitz-Hlder maps on Orlicz spaces[J]. Studia Mathematica, 1972, 42(3).
 \bibitem{Ghadir} G. Ghadir, Non-commutative Orlicz spaces associated to a modular on $\tau$-measurable operators, J. Math. Anal. Appl. 395 (2012), 705-715.
 \bibitem{Clarkson} J. A. Clarkson, Uniformly Convex Spaces[J]. Transactions of the American Mathematical Society, 1936, 40(3):396-414.
  \bibitem{Lindenstrauss}    J. Lindenstrauss and L. Tzafriri. Classical Banach spaces. II. Springer-Verlag, Berlin, 1979.
 \bibitem{Zhenhua} L. N. Jiang, Z. H. Ma, Closed subspaces and some basic topological properties of noncommutative Orlicz spaces, Proceedings  Mathematical Sciences, 127(3) (2017), 525-536.
\bibitem{Muratov} M. Muratov, Noncommutative Orlicz spaces, Dokl. Akad. Nauk UzSSR 6 (1978), 11-13.
\bibitem{Rao} M. M. Rao, Z. D. Ren, Theory of Orlicz spaces (New York, Basel, Hong Kong: Marcel Dekker Inc.) (1981).
\bibitem{Rashed} M. H. A. Al-Rashed, B. Zegarlinski, Noncommutative Orlicz spaces associated to a state, Studia Math., 180 (2007), 199-209.
\bibitem{Rashed1} M. H. A. Al-Rashed, B. Zegarlinski, Noncommutative Orlicz spaces associated to a state II, Linear Algebra and its Applications., 435 (2011), 2999-3013.
\bibitem{Xu} Q. Xu, Operator spaces and noncommutative $\rm L_{p}$ spaces, 2007.
\bibitem{Chen} S. T. Chen, Geometry of Orlicz spaces, in: Dissertations Mathematicae, Warszawa, 1996.
\bibitem{Kunze} W. Kunze, Noncommutative Orlicz spaces and generalized Arens algebras, Math. Nachr. 147 (1990), 123-138.
\end{thebibliography}

\end{spacing}
\end{document}